\documentclass{icgg}

\usepackage{graphicx}     
\usepackage{microtype}    
                          
\usepackage{amsmath,amsthm,amssymb}
\usepackage{bm}                         

\title{\uppercase{Double-Line Rigid Origami}}

%
\author{Thomas C. HULL\textsuperscript{1}
  and
  Tomohiro TACHI\textsuperscript{2}}
\affiliation{
  \textsuperscript{1} Western New England University, USA
  \qquad
  \textsuperscript{2} University of Tokyo, Japan}

\hypersetup{
  pdfauthor = {Tomohiro Tachi and Thomas Hull},
  pdftitle = {Instructions for Formatting Papers for the 16th
    International Conference on Geometry and Graphics},
  pdfsubject = {LaTeX template file},
  pdfkeywords = {Geometry, graphics, ISGG, ICGG, LaTeX, article
    formatting}
}


{\makeatletter
 \gdef\xxxmark{%
   \expandafter\ifx\csname @mpargs\endcsname\relax 
     \expandafter\ifx\csname @captype\endcsname\relax 
       \marginpar{xxx}
     \else
       xxx 
     \fi
   \else
     xxx 
   \fi}
 \gdef\xxx{\@ifnextchar[\xxx@lab\xxx@nolab}
 \long\gdef\xxx@lab[#1]#2{\textbf{[\xxxmark #2 ---{\sc #1}]}}
 \long\gdef\xxx@nolab#1{\textbf{[\xxxmark #1]}}
}

\newtheorem{theorem}{Theorem}

\theoremstyle{definition}


\begin{document}

\twocolumn[%
\begin{@twocolumnfalse}
  \maketitle
  \begin{abstract}

In this paper, we will show methods to interpret some rigid origami with higher degree vertices as the limit case of structures with degree-4 supplementary angle vertices. 
The interpretation is based on separating each crease into two parallel creases, or \emph{double lines}, connected by additional structures at the vertex. We show that double-lined versions of degree-4 flat-foldable vertices possess a rigid folding motion, as do symmetric degree-$2n$ vertices. 
The latter gives us a symbolic analysis of the original vertex, showing that the tangent of the quarter fold angles are proportional to each other. 
The double line method is also a potentially useful in giving thickness to rigid origami mechanisms. 
By making single crease into two creases, the fold angles can be distributed to avoid $180^\circ$ folds, when panels can easily collide with each other. 
This can be understood as an extension of the crease offset method of thick rigid origami with an additional guarantee of rigid-foldability.
  \end{abstract}
  \keywords{Geometry, graphics, ISGG, ICGG, LaTeX, article formatting}
\end{@twocolumnfalse}]


\section{Introduction}
\subsection{Background}
Rigid origami is a kinematic model of folding a \emph{crease pattern} $C$, a straight-line planar graph drawn on a compact region of the plane, into a 3D shape such that the faces of $C$ remain flat and the edges of $C$, also called the \emph{creases}, act like hinges.
Some families of rigid origami crease patterns exhibit 1-degree of freedom (DOF) mechanisms applicable to deployable structures. 
One of the families of 1-DOF origami is made with a network of degree-4 vertices whose sector angles are supplementary to each other, aka \emph{flat-foldable} vertices. 
Such a system has some interesting and useful characteristics:
\begin{enumerate}
\item \label{flattens} The structure has two flat states (flat-unfolded and flat-folded).
\item \label{stateimplies}If there exists a valid 3D state, then there exists a continuous path from the unfolded to folded states~\cite{Tachi:2009}.
\item \label{tanhalf}The tangent of half of each fold angle at the creases are proportional to each other. The coefficient is a constant determined only by the sector angles in the crease pattern. (See the Appendix for details.)
\end{enumerate}
By property~\ref{tanhalf}, the configuration space of this type of pattern may be analyzed symbolically, while in generic origami patterns even a single vertex model often requires numerical analysis to study. 
Also, because of property~\ref{stateimplies}, the vertices link to form cycles of constraints, forming an over-constrained system (a system with more constraints than variables), but we still obtain a foldable mechanism (an over-constrained mechanism).
This over-constrained nature is useful for designing systems with structural stiffness and deployability, allowing for controlled and uniform actuation~\cite{filipov-etal:2015}.
The drawback is that this powerful property is limited to the geometry of degree-$4$ flat-foldable vertices.
Also, property~\ref{flattens} is a useful property that the structure can be stowed in a compact, flat state; 
however, when we use actual thick material, the mechanism does not completely go flat, so an idea to overcome the thickness is required for making use of flat-foldability of degree-4 flat-foldable vertex mechanism.

In this paper we will describe a method for converting a crease pattern $C$ into a flat-foldable crease pattern $DL(C)$, called the \emph{double-line version of $C$}, consisting only of degree-4 vertices and whose rigid origami kinematics is identical to that of $C$.
This method can interpret the kinematics of  some higher degree or non flat-foldable origami as the combination of degree-4, flat-foldable vertices, so that it can help designers of rigid origami mechanisms to bring the advantages of degree-4, flat-foldable origami to more complicated crease patterns.
Also, the double-line crease pattern is itself has practical advantage as the method for dealing with thickness.
The method can split the fold angle of a single line into smaller fold angles of two lines, to make full range motion of the original flat-foldable crease pattern without being interfered by the collision of thick panels.

We follow the usual definitions of rigid origami \cite{bel-hull:2002,LangTwist15,Tachi-Hull:2016}.  A \emph{rigid folded state} of a crease pattern $C$ is a piecewise isometric homeomorphism $\sigma:P\to\mathbb{R}^3$  where P is the compact region on which $C$ is embedded and each face $F$ of $C$ is congruent to its image $\sigma(F)$.  The \emph{fold angle} of a crease is the signed angle formed by the normals of the images of the incident faces under $\sigma$.  Thus, zero fold angle means the crease is unfolded and a \emph{valley} (\emph{mountain}) crease has positive (negative) fold angle.  Each rigid folded state is determined by the fold angles at each crease, and thus the \emph{parameter space} of $C$ is $\mathbb{R}^{|E|}$ where $E$ is the set of creases (i.e., edges) of $C$.  The \emph{configuration space} of $C$ is the set of points in the parameter space that represent a valid rigid folded state.  A \emph{rigid folding motion} of an origami crease pattern $C$ is a path in its configuration space.

The mountain and valley creases of a degree-4 vertex in a rigid folded state must follow a pattern:  There will be either 3 Vs and 1 M (or vice-versa), and the sector angles adjacent to the lone M crease must sum to $<180^\circ$.  This means that a generic degree-4 vertex that has been rigidly folded will have at least two different possible locations for the lone M crease, as can be seen in Figure~\ref{fig:ffd4vert}.  We refer to these two possibilities as different \emph{modes} in which the vertex can fold.  (Note that if $\alpha=\beta$ in Figure~\ref{fig:ffd4vert} then there are three possible locations for the lone M fold, but only one case, where the M is between the sector angles $\alpha$ and $\beta$, results in a rigid folding motion where all four creases are folding with non-zero folding angles.)  Furthermore, Equation~\eqref{eq:folding-modes} in the Appendix indicates that the opposite pair of valley creases, which we call the \emph{major creases}, will have equal fold angles and fold faster than the mountain-valley opposite pair (which are complementary and called the \emph{minor creases}).


\subsection{The Double-Line Technique}

We begin by specifying the double-line technique for a degree-4 vertex; this process is illustrated in Figure~\ref{fig:DL}. Given a degree-4 (not necessarily flat-foldable) origami vertex $V$ with sector angles $\alpha$, $\beta$, $\gamma$, and $\delta$ between the creases $e_0, \ldots, e_3$, we place line segments perpendicular to each crease line $e_i$ at a distance $r_i$ from $V$, for $i=0,\ldots, 3$, so as to make a polygon 
 $P=c_0 c_1 c_2 c_3$ (see Figure~\ref{fig:DL}(b)).  Then for each crease $e_i$ we draw two lines parallel to $e_i$ emanating from the polygon corners $c_{i-1}$ and $c_i$, where the indices are taken mod 4.  These parallel creases (the ``double-line" of each crease $e_i$) together with the polygon $P$ gives us a new crease pattern, called the \emph{double-line of $V$}, which we denote $DL(V)$ and is shown in Figure~\ref{fig:DL}(c).  Note that the sector angles $\alpha$, $\beta$, $\gamma$, and $\delta$ are preserved, although separated among the vertices of $DL(V)$, so that each vertex will have sector angles, e.g., $\alpha, 90^\circ, \pi-\alpha$, and $90^\circ$, in order.  This guarantees that each vertex of $DL(V)$ will be flat-foldable, even if the original vertex $V$ is not.

\begin{figure}[t]
	\includegraphics[width=\linewidth]{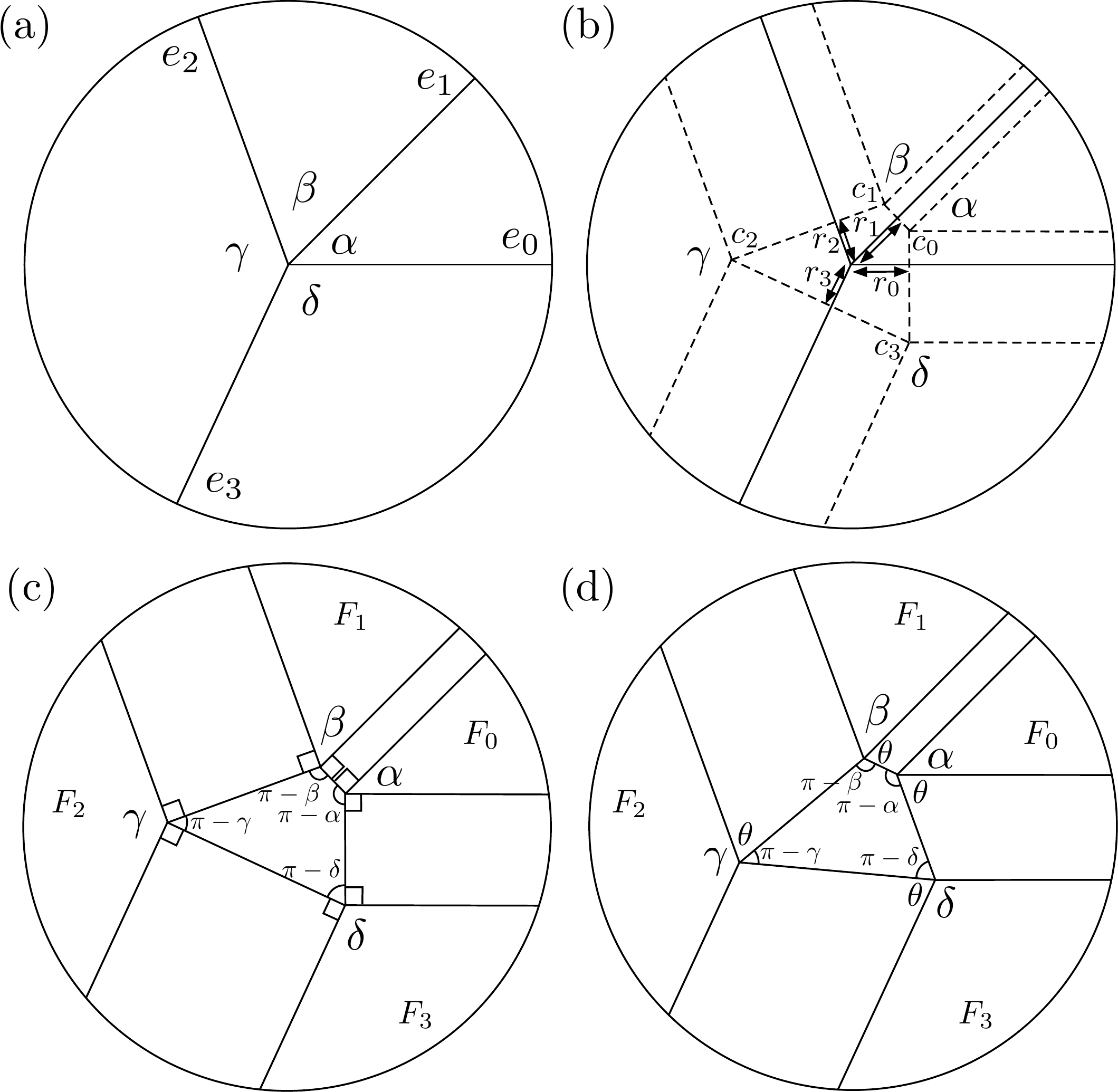}
	\centering
	\caption{Constructing a double-line vertex.  (a) The original origami vertex $V$.  (b) Constructing the polygon $P$ and the double lines.  (c) The new crease pattern $DL(V)$.  (d) The rotated double-line crease pattern $DL(V,\theta)$.}
	\label{fig:DL}
\end{figure}

The crease pattern $DL(V)$ always has the ``double-line" creases being perpendicular to the central polygon $P$.  However, we may rotate the central polygon so each side of $P$ makes an angle of $\theta$ with the double-line creases, as shown in Figure~\ref{fig:DL}(d).  
We refer to this crease pattern as the \emph{rotated double-line of $V$} and denote it $DL(V,\theta)$.  Our rotation of the central polygon does not change the angles $\alpha$, $\beta$, $\gamma$, and $\delta$ or their supplements.  Thus the vertices of $DL(V,\theta)$ will all be flat-foldable as well.

This crease pattern construction method can be performed on any origami vertex in the same way.  If the vertex $V$ is of degree $n$, then the central polygon of $DL(V)$ will have $n$ sides, there will be $n$ pairs of creases emanating from this polygon, and by the same argument as above, each vertex of $DL(V)$ will be flat-foldable.  We may also apply this method to crease patterns $C$ with more than one vertex, where we adjust the lengths $r_i$ in Figure~\ref{fig:DL}(b) for each vertex in order to make the double-line creases line up between adjacent vertices in $C$.

This idea of creating new crease patterns by replacing single creases with parallel creases (or \emph{pleats}, as they are called in the origami literature) is not new \cite{bateman:2002,lang-bateman:2011,Palmer:1996,Resch:1968,Verrill:1998}.  Our concern, however, is the rigid foldability and the kinematics of double-line crease patterns.
Specifically, we are interested in the comparison between the configuration space of $DL(V,\theta)$ and $V$, from the viewpoint that a rigid folded state of $DL(V,\theta)$ is the ``chamfered'' version of a folded shape of $V$.
Here, we consider the configuration of $V$ \emph{corresponding to} a configuration of $DL(V,\theta)$, which is defined by a folded shape of $V$ whose fold angle of each crease is equal to the sum of fold angles of the corresponding pair of fold lines in $DL(V,\theta)$.

\textbf{Overview:}
We will see in Section~\ref{sec:deg4} that if the original vertex $V$ is flat-foldable, then there will always exist rigid folding motions for $DL(V,\theta)$; we will classify these and compare their configuration space with those of $V$. 
We show that by the careful choice of $\theta$, the folding speed ratio between the pair of creases can be tweaked, and this also enables the configuration space of $V$ corresponding to $DL(V,\theta)$ can span fully, i.e., between flat-folded states.
In Section~\ref{sec:2n} we will describe a class of rigid folding motions for double-line crease patterns of symmetric, degree-$2n$ vertices.
We also show the number of possible modes that are realized by double-line origami.  
In Section~\ref{sec:thick} we will describe how double-line crease patterns can be applied to the problem of \emph{thick origami}, or rigid folding motions using material with non-trivial thickness.

\section{Double-line degree-4 vertices}
\label{sec:deg4}

\begin{figure*}[tbhp]
	\includegraphics[width=0.9\linewidth,page=1]{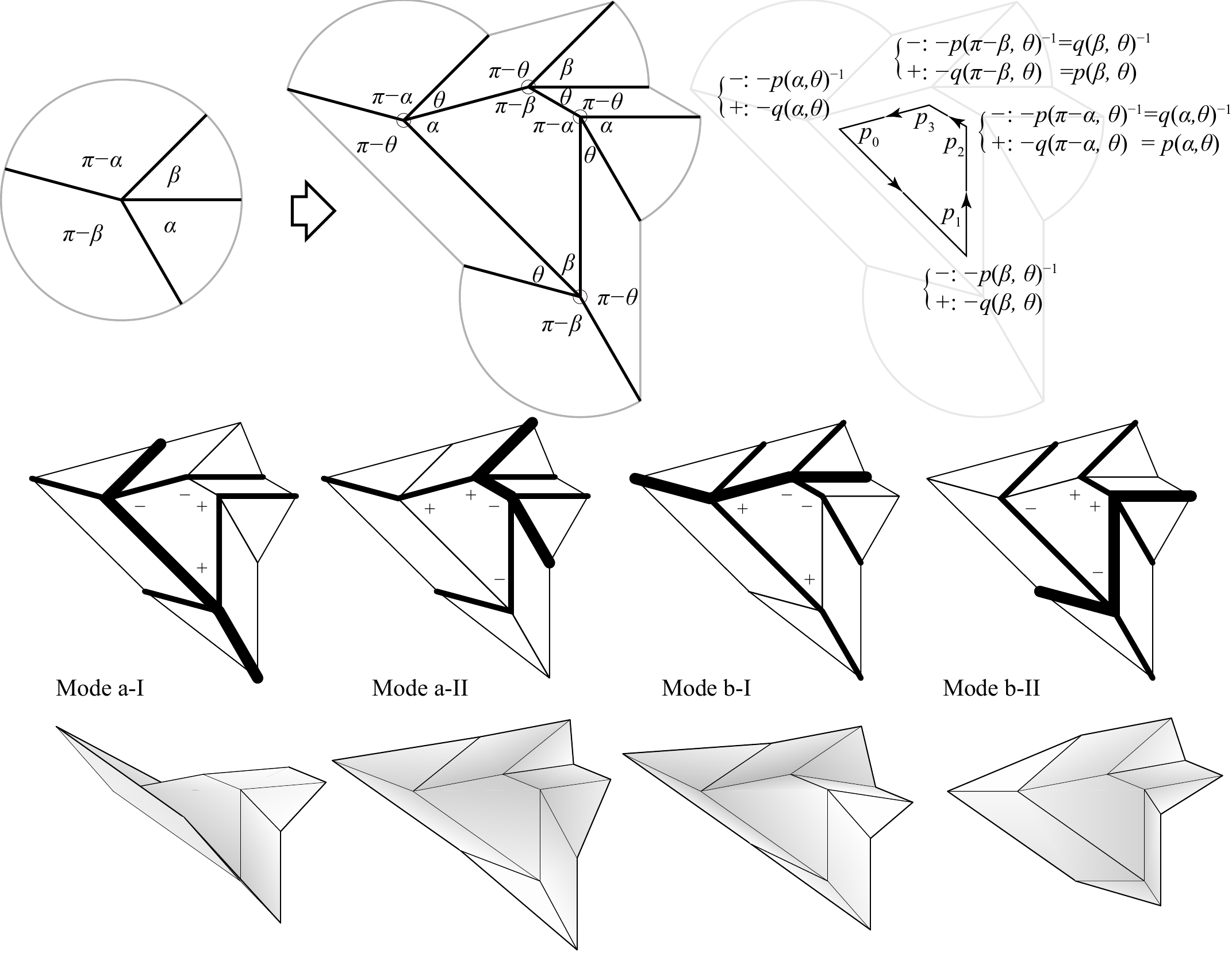}
	\centering
	\caption{Four folding modes of double-lined flat-foldable degree-4 vertex. Mode $-$ and $+$ indicates that the absolute speed of the folding increases (multiplied by $\pm p^{-1}$ or $\pm q^{-1}$)  or decreases (multiplied by $\pm p^{+1}$ or $\pm q^{+1}$). The thicker lines have higher absolute folding speed and the thinner lines have lower folding speed.}
	\label{fig:ffd4-modes}
\end{figure*}

\begin{figure*}[tbhp]
	\includegraphics[width=1.0\linewidth,page=2]{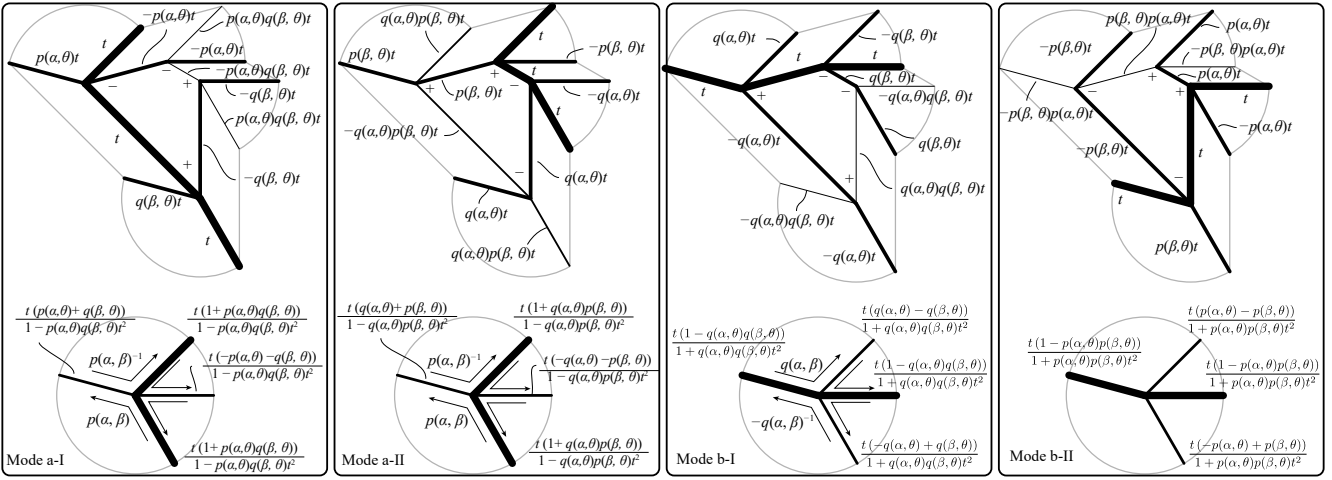}
	\centering
	\caption{Four folding modes and their correspondence to the folding modes of original vertex.}
	\label{fig:ffd4-summation}
\end{figure*}

In this section, we prove that there exists four rigid folding modes for $DL(V,\theta)$ for an arbitrary twist angle $\theta$ if $V$ is a flat-foldable degree-$4$ vertex (Theorem~\ref{thm:deg4ff}).
Also, we further compare the configuration space of $DL(V)$ with that of $V$, to see that two of these four modes of $DL(V,\theta)$ fully represent one of the folding modes of $V$, and of the other two modes of $DL(V,\theta)$ partially represent the other folding mode of $V$ (Theorem~\ref{thm:deg4ff-represent}).
By tweaking angle $\theta$, we can always make one of the modes to be fully represented, and also in a special case, we may additionally make the minor crease of $V$ to be represented by a pair of creases with the same fold angle (the half angle of the original).

\begin{theorem}\label{thm:deg4ff}
If $V$ is a flat-foldable, degree-4 vertex, then $DL(V,\theta)$ has a rigid folding motion from the flat, unfolded state.  Furthermore, there are at least four folding modes for $DL(V,\theta)$ as shown in Figure~\ref{fig:ffd4-modes}.
\end{theorem}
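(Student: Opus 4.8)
The plan is to exploit the fact that every vertex of $DL(V,\theta)$ is itself a flat-foldable degree-4 vertex, so that the whole pattern is a closed loop of four such vertices joined along the edges of the central polygon $P$, and then to reduce rigid-foldability to a single loop-closure condition. First I would record the combinatorics: the four corners $c_0,\dots,c_3$ of $P$ are the only interior vertices; consecutive corners $c_i,c_{i+1}$ share exactly one crease, namely the polygon edge $\mathrm{pe}_{i+1}$ (perpendicular, resp.\ at angle $\theta$, to $e_{i+1}$); and the cyclic sector angles at $c_i$ are $\omega_i,\theta,\pi-\omega_i,\pi-\theta$, where $\omega_i\in\{\alpha,\beta,\gamma,\delta\}$ is the sector of $V$ carried by the wedge at $c_i$. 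Because $V$ is flat-foldable, Kawasaki's condition gives $\omega_0+\omega_2=\omega_1+\omega_3=\pi$, and this is essentially the only property of $V$ I expect to need.

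Next I would parametrize the single-vertex motion at each $c_i$ by tangent half-angles, using property~\ref{tanhalf} and Equation~\eqref{eq:folding-modes} of the Appendix. Since $\mathrm{pe}_i$ and $\mathrm{pe}_{i+1}$ are precisely the two creases of $c_i$ that persist into the loop, I would solve the vertex relation to express $\tan(\sigma_{i+1}/2)=\mu_i\tan(\sigma_i/2)$, where $\sigma_i$ is the fold angle of $\mathrm{pe}_i$ and $\mu_i$ is a signed constant determined by $\omega_i$, $\theta$, and the choice of mode at $c_i$ (which crease plays the role of the lone mountain). Propagating around the cycle $\sigma_0\mapsto\sigma_1\mapsto\sigma_2\mapsto\sigma_3\mapsto\sigma_0$ then forces the loop-closure identity $\mu_0\mu_1\mu_2\mu_3=1$.

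The hard part is this loop-closure computation: I must show both that the product of the four vertex multipliers genuinely equals $1$ (with consistent mountain/valley signs), and that matching the fold angles on the shared polygon creases really certifies a globally consistent 3D folded state rather than only a formal consistency of the $\sigma_i$. For the first point I would substitute the explicit multiplier formula, pair opposite corners, and use $\omega_0+\omega_2=\omega_1+\omega_3=\pi$ so that the $\theta$-dependence and the $\omega_i$-dependence telescope to $1$; the supplementarity of $V$ is exactly what makes the cancellation work. For the second point I would invoke that for a single-cycle loop of flat-foldable degree-4 vertices, consistency of the tangent-half-angle relations along the shared creases is equivalent to triviality of the folding monodromy around the loop, so that the formal solution lifts to an actual rigid folded state.

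Finally, enumerating the sign/speed choices compatible with loop closure yields the modes. At each corner the mode choice is binary, but loop closure couples these choices; rather than determine the exact count I would simply exhibit the four sign patterns of Figure~\ref{fig:ffd4-modes}, distinguished by the two independent ``speed-up versus slow-down'' options captured by the multiplier families $p^{\pm1}$ and $q^{\pm1}$, and check that each yields a valid mountain-valley assignment. Because all the relations are homogeneous in the tangent half-angles, $\sigma_i=0$ (the flat state) is always a solution, so each admissible mode produces a one-parameter family issuing from the flat unfolded configuration, which is the desired rigid folding motion.
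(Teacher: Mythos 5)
Your proposal follows essentially the same route as the paper: both reduce rigid foldability of $DL(V,\theta)$ to the loop-closure condition \eqref{eq:prod} around the central polygon face, use the tangent-half-angle speed coefficients of the four flat-foldable corner vertices (where the supplementarity $\omega_0+\omega_2=\omega_1+\omega_3=\pi$ from flat-foldability of $V$ makes the multipliers at opposite corners cancel, via identities like $p(\pi-\alpha,\theta)=-q(\alpha,\theta)$), and then exhibit the four mode patterns $(-++-)$, $(+--+)$, $(++--)$, $(--++)$ whose products equal $1$. The step you flag as the ``hard part''---that consistency of the tangent-half-angle relations around the loop certifies an actual rigid folding motion---is exactly what the paper also relies on, citing it as a known sufficiency result for disk-topology patterns of flat-foldable degree-4 vertices rather than proving it.
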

\begin{proof}
Let $V$ be a flat-foldable, degree-4 vertex.  Then there must be two adjacent sector angles in $V$ whose sum is $\leq 180^\circ$; call these $\alpha$ and $\beta$.  The flat foldability of $V$ then tells us that the sector angles are $\alpha$, $\beta$, $\pi-\alpha$, and $\pi-\beta$.  
Refer to Figure~\ref{fig:ffd4-modes}.
Every vertex of $DL(V,\theta)$ is a flat-foldable, degree-4 vertex, so if the pattern has a continuous rigid folding motion, the tangent of half of the fold angles of all the creases are proportional to each other~\cite{evans:2015,Tachi-Hull:2016} (see Appendix).
This proportion, the \emph{folding speed coefficient}, between incident creases is only determined by the sector angles of the incident vertex, so the rigid foldability of the whole system can be checked by determining if,
\emph{for each strictly interior face, the product of the  speed coefficients between adjacent edge equals $1$} (i.e., Equation~\eqref{eq:prod}).
Now, the  speed coefficients at each corner in counterclockwise order are:

\begin{align}
p_0=
&\begin{cases}
-p(\alpha,\theta)^{-1} & \textrm{case $-$}\\
-q(\alpha,\theta) & \textrm{case $+$}\\
\end{cases}
\\
p_1=
&\begin{cases}
-p(\beta,\theta)^{-1} & \textrm{case $-$}\\
-q(\beta,\theta) & \textrm{case $+$}\\
\end{cases}
\\
p_2=
&\begin{cases}
q(\alpha,\theta)^{-1} & \textrm{case $-$}\\
p(\alpha,\theta) & \textrm{case $+$}\\
\end{cases}
\\
p_3=
&\begin{cases}
q(\beta,\theta)^{-1} & \textrm{case $-$}\\
p(\beta,\theta) & \textrm{case $+$}\\
\end{cases},
\end{align}
where $p$ and $q$ are the function of sector angels given in Appendix.
For arbitrary $\alpha, \beta, \theta$, four patterns of modes $(-++-)$, $(+--+)$, $(++--)$, $(--++)$  satisfy Equation~\eqref{eq:prod}, i.e., $p_0p_1p_2p_3 =1$.
We respectively call these modes a-I, a-II, b-I, and b-II


\end{proof}

\begin{figure}[t]
	\includegraphics[width=\linewidth]{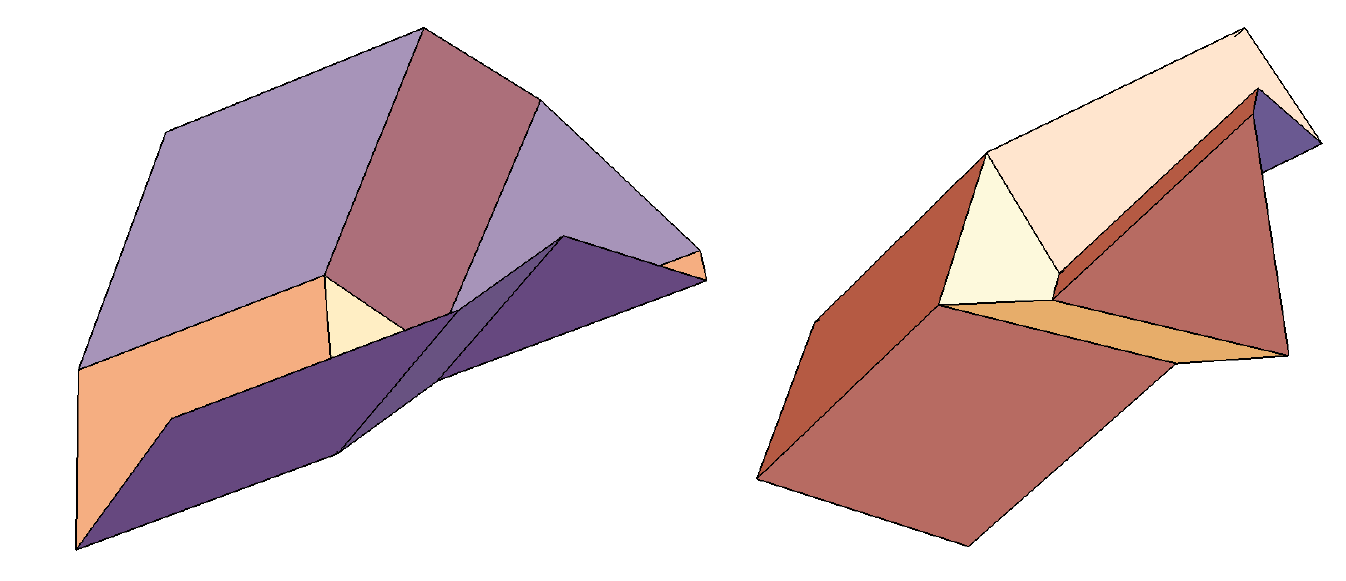}
	\centering
	\caption{The symmetric folding modes $(+ - + -)$ and $(- + - +)$ when $\alpha=\beta$ and $\theta=90^\circ$.}  
	\label{fig:othermodes}
\end{figure}

If more symmetry is introduced then other modes are possible.  Specifically, if $\alpha=\beta$ or $\alpha=\pi-\beta$ and $\theta=90^\circ$ then the modes $(+ - + -)$ and $(- + - +)$ are possible and give different folding results, as shown in Figure~\ref{fig:othermodes}.


Of particular interest is the comparison of the kinematics of $DL(V,\theta)$ with those of $V$.  That is, if $V$ is a degree-4, flat-foldable vertex, then the relationship between its fold angles as $V$ rigidly folds and unfolds are as given in the Appendix.  In $DL(V,\theta)$ the regions labeled $F_0,\ldots, F_3$ in Figure~\ref{fig:DL}(c) and (d) correspond to the sector regions of $V$, and thus we would hope that as $DL(V,\theta)$ rigidly folds and unfolds the angles between the planes $F_0,\ldots, F_3$ will be the same as the folding angles of $V$.  This is indeed the case, as we will now show.  

In $DL(V,\theta)$ one set of double line creases on opposite sides of the inner polygon will correspond to the major creases (the \emph{major axis}) and the other set will correspond to the minor creases (the \emph{minor axis}).  Since the major creases of a degree-4 vertex fold faster then the minor creases, the major axis of $DL(V,\theta)$ will be the double line creases that include the fastest folding speed, which includes the creases with folding angle parameterization  $t=\tan(\rho/2)$, where the actual folding angle is $\rho$.  These are the creases labeled $t$ in the parameterizations shown in Figure~\ref{fig:ffd4-summation} for the four different modes that are possible, where we also show the folding multipliers at each double-line crease to indicate how they are proportional to one another; these multipliers are obtained from the folding kinematics of the degree-4 vertices in $DL(V,\theta)$. 

Consider Mode a-I in Figure~\ref{fig:ffd4-summation}.  (The other modes follow similarly.)  The sum of the folding angles of the major axis creases is
\begin{multline}\label{eq:majsum}
2\arctan(t)+2\arctan(p(\alpha,\theta)q(\beta,\theta)t)\\
= 2\arctan\left(
\frac{(1+p(\alpha,\theta)q(\beta,\theta))t}
{1-p(\alpha,\theta)q(\beta,\theta)t^2}\right).
\end{multline}
The sum of the folding angles of the minor axis creases is
\begin{multline}\label{eq:minsum}
2\arctan(p(\alpha,\theta)t) + 2\arctan(q(\beta,\theta)t)\\
= 2\arctan\left(
\frac{(p(\alpha,\theta)+q(\beta,\theta))t}
{1-p(\alpha,\theta)q(\beta,\theta)t^2}\right).
\end{multline}
Dividing arguments of \eqref{eq:majsum} and \eqref{eq:minsum} gives us
$$\frac{\cos\frac{\alpha+\beta}{2}}{\cos\frac{\alpha-\beta}{2}} =        
\frac{1-\tan\frac{\alpha}{2}\tan\frac{\beta}{2}}{1+\tan\frac{\alpha}{2}\tan\frac{\beta}{2}}   $$
which is the vertex multiplier $p(\alpha,\beta)$ as seen in the Appendix.  Thus the kinematics of the major and minor axes of $DL(V,\theta)$ will be the same as that of the degree-4, flat-foldable vertex $V$ when folding in Mode a-I in Figure~\ref{fig:ffd4-summation}, and the other modes can be shown similarly.

However, the fact that the kinematic equations for $V$ and $DL(V,\theta)$ are the same does not mean the entire range of their folding motions will be the same; indeed, they are not in general.  Notice that if $\beta<\theta$ then $q(\beta,\theta)>0$ and when $\beta>\theta$ we have $q(\beta,\theta)<0$.  Similarly, $p(\alpha,\theta)>0$ when $\theta<\pi-\alpha$  and $p(\alpha,\theta)\leq 0$ otherwise.  

\begin{figure}[t]
	\includegraphics[width=0.6\linewidth]{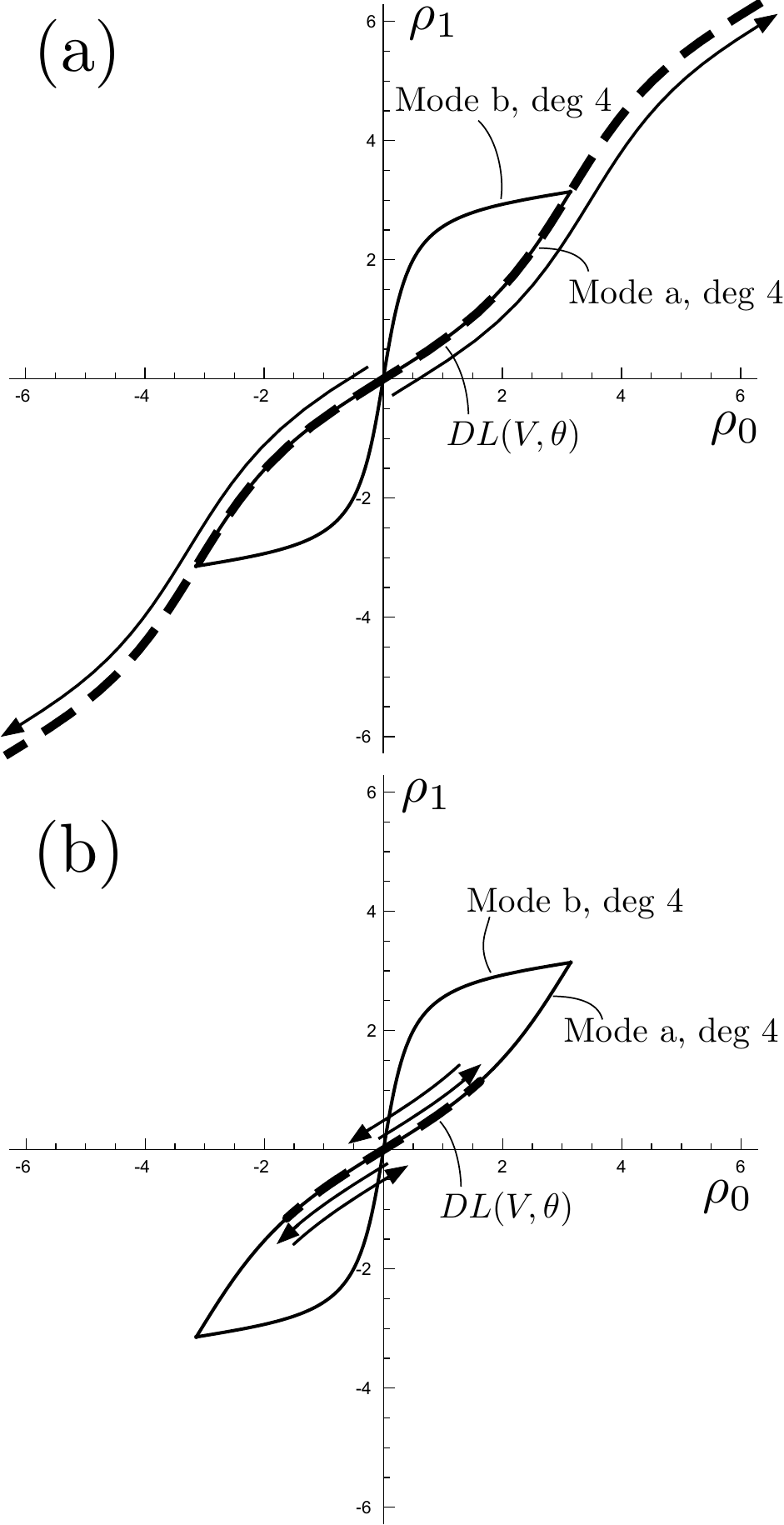}
	\centering
	\caption{The configuration spaces of the folding angles $(\rho_0,\rho_1)$ of a degree-4 flat-foldable vertex $V$ under Mode a-I, with the equivalent configuration space of $DL(V,\theta)$ superimposed. (a) The case $\beta<\theta<\pi-\alpha$. (b) The case $\theta>\pi-\alpha$, $\beta$.}  
	\label{fig:config}
\end{figure}

Now, if $p(\alpha,\theta)$ and $q(\beta,\theta)$ have the same sign, then Equations~\eqref{eq:majsum} and \eqref{eq:minsum} tell us that the fold angles at each double-line crease on the major and minor axes will have the same sign, and so they will add to give us a total fold angle in the range $[-2\pi, 2\pi]$; see Figure~\ref{fig:config}(a). This happens if $\beta<\theta<\pi-\alpha$ or $\pi-\alpha<\theta<\beta$.  In this case the configuration space of the rigid folding of $DL(V,\theta)$ will contain the configuration space of the rigid folding of $V$, and thus $DL(V,\theta)$ will mimic the folding of $V$ but then will continue to fold with double-line folding angle sums greater then $\pi$ in absolute value, until collision of the material occurs.

On the other hand, if  $p(\alpha,\theta)$ and $q(\beta,\theta)$ have opposite signs, then the fold angles at each double-line crease will subtract from each other.  In particular, this means that if we let $S$ be the sum of the fold angles at a double-line pair (either the major or minor axis), then we will have $S=0$ when the folding angles equal zero at the unfolded state and $S=0$ again when the folding angles all equal $\pm\pi$ at the flat-folded state.  In between the sum $S$ will increase and then decrease, or vice-versa, achieving a maximum (or minimum) value $M$.  The configuration space for this is shown in Figure~\ref{fig:config}(b).  If this extreme value $M$ is $\pm\pi$ then $DL(V,\theta)$ will mimic the folding of $V$ up to this point, but this only happens when either $\alpha=\theta$ or $\beta=\theta$.  Otherwise, if $\beta<\theta$ and $\pi-\alpha<\theta$, or if $\beta>\theta$ and $\pi-\alpha)>\theta$, we will have that $DL(V,\theta)$ mimics the rigid folding of $V$ from the unfolded state up to a point, whereupon the double-line folding angle sum $S$ will return to zero.

\begin{table*}
\caption{Effects of $0<\alpha<\beta<\pi/2$ and $\theta$ on the angle sum $S$ of the major and minor creases in $DL(V,\theta)$.}
\centering
\begin{tabular}{l c c c}
Mode & $S$ spans $[-2\pi,2\pi]$ & $S$ spans $[-M,M]$ for some $M<\pi$ \\
\hline
a-I & $\beta < \theta <\pi-\alpha$ & $\theta <\pi-\alpha, \beta$ or $\theta>\pi-\alpha,\beta$\\
a-II & $\alpha<\theta<\pi-\beta$ & $\theta<\pi-\beta,\alpha$ or $\theta>\pi-\beta, \alpha$\\
b-I & $\alpha<\theta<\beta$ & $\theta<\alpha,\beta$ or $\theta>\alpha,\beta$ \\
b-II & $\pi-\beta<\theta<\pi-\alpha$ & $\theta<\pi-\{\alpha,\beta\}$ or $\theta>\pi-\{\alpha,\beta\}$\\
\hline
\end{tabular}
\label{table1}
\end{table*}

The other folding modes of $DL(V,\theta)$ shown in Figure~\ref{fig:ffd4-summation} follow similar rules; we summarize them in Table~\ref{table1} under the assumption that $0<\alpha<\beta<\pi/2$ and have proven the following:

\begin{theorem}\label{thm:deg4ff-represent}
Among four folding modes of $DL(V,\theta)$ for a flat-foldable, degree-4 vertex $V$,
exactly two modes (a-I and a-II) correspond to one mode (Mode a) of $V$, and the other two modes (b-I and b-II) correspond to the other mode (Mode b) of $V$;
the range of $\theta$ that allows for full range of folding motion and a finite motion for each mode is given by Table~\ref{table1}.
\end{theorem}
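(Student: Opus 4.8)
The plan is to separate the statement into two parts: the two-to-one correspondence between the four modes of $DL(V,\theta)$ and the two modes of $V$, and the determination of the $\theta$-ranges in Table~\ref{table1}. For the correspondence I would repeat, for each of the four modes, the angle-sum computation already carried out for Mode a-I in Equations~\eqref{eq:majsum} and~\eqref{eq:minsum}: add the two fold angles along the major axis and, separately, along the minor axis, reading off the folding multipliers from Figure~\ref{fig:ffd4-summation}, collapse each pair with the tangent addition formula, and form the ratio of the two resulting arguments. The essential point is that the shared parameter $t$ and every dependence on $\theta$ cancels, leaving a function of $\alpha$ and $\beta$ alone. I expect Modes a-I and a-II each to return the vertex multiplier $p(\alpha,\beta)=\cos\frac{\alpha+\beta}{2}/\cos\frac{\alpha-\beta}{2}$ governing Mode a of $V$, and Modes b-I and b-II each to return the complementary multiplier governing Mode b. Since a flat-foldable degree-4 vertex admits exactly two folding modes (the lone mountain crease has two admissible positions), this exhibits the map from the four double-line modes onto the two modes of $V$ as precisely two-to-one.

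For the range claim I would argue entirely from the signs of the two multipliers attached to a given axis, using the sign rules recorded just before Figure~\ref{fig:config}: $q(\beta,\theta)>0$ exactly when $\beta<\theta$, and $p(\alpha,\theta)>0$ exactly when $\theta<\pi-\alpha$. When the two multipliers share a sign, both summands in the axis angle $S$ grow monotonically in the same direction, so $S$ sweeps the full interval $[-2\pi,2\pi]$ as in Figure~\ref{fig:config}(a); when they have opposite signs the summands partially cancel, $S$ returns to $0$ at the flat-folded state, and $S$ attains an interior extremum $M$ as in Figure~\ref{fig:config}(b). For Mode a-I, ``same sign'' unpacks to $\beta<\theta<\pi-\alpha$ or $\pi-\alpha<\theta<\beta$, matching the full-range column of Table~\ref{table1}, while the complementary inequalities give the finite-motion column (under the table's ordering $0<\alpha<\beta<\pi/2$ the second interval is vacuous). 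Repeating this bookkeeping for a-II, b-I, and b-II, with the roles of $p(\alpha,\theta),q(\beta,\theta)$ taken over by the multipliers that Figure~\ref{fig:ffd4-summation} assigns to those modes, fills in the remaining three rows.

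The step I expect to be the main obstacle is neither the correspondence nor the generic sign count but the behavior at the interval endpoints, where one multiplier vanishes and $S$ reaches $M=\pm\pi$. To pin this down I would not rely on signs alone but analyze the extremum of the single-axis angle $S(t)=2\arctan(\mu_1 t)+2\arctan(\mu_2 t)$ directly, locating its critical point from $S'(t)=0$ and checking when the extreme value equals $\pi$. One verifies that, in the opposite-sign regime, $M=\pi$ occurs only on the loci where one of $p(\alpha,\theta),q(\beta,\theta)$ is zero, that is, exactly at the endpoints of the full-range $\theta$-intervals in Table~\ref{table1}, so that $DL(V,\theta)$ still mimics the complete folding of $V$ there, whereas strictly inside the opposite-sign region $M<\pi$ and the motion is only partial. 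Carrying this extremal computation out cleanly for each axis, while keeping the four modes' multiplier assignments straight, is where the care is required; the rest is the routine algebra already illustrated for Mode a-I.
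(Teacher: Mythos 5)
Your proposal is correct and follows essentially the same route as the paper: the mode correspondence is obtained exactly as in Equations~\eqref{eq:majsum}--\eqref{eq:minsum}, by collapsing each axis angle sum with the tangent-addition formula and noting that the ratio of the arguments reduces to the $\theta$- and $t$-independent vertex multiplier $p(\alpha,\beta)$ (resp.\ $q(\alpha,\beta)$ for the b-modes), and the ranges in Table~\ref{table1} come from the same sign analysis of $p(\alpha,\theta)$ and $q(\beta,\theta)$ distinguishing the same-sign case (total fold angle spanning $[-2\pi,2\pi]$, Figure~\ref{fig:config}(a)) from the opposite-sign case ($S$ returning to zero with an interior extremum $M$, Figure~\ref{fig:config}(b)). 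Your one addition --- the explicit $S'(t)=0$ computation showing that $|M|=\pi$ forces one multiplier to vanish --- rigorizes a step the paper only asserts, and it correctly locates the critical values at $\theta=\beta$ or $\theta=\pi-\alpha$ for mode a-I, consistent with the paper's later remark on critical values (the paper's earlier phrase ``$\alpha=\theta$'' appears to be a slip for $\theta=\pi-\alpha$).
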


The critical value in Table~\ref{table1}, e.g., $\theta = \beta, \pi-\alpha$ for mode a-I, are when one of the fold angles of parallel double lines is kept $0$, so it is a trivial case where the pattern is essentially original $V$ (see Figure~\ref{fig:critical}).
For a practical reason explained in Section~\ref{sec:thick}, we want the ratio between two fold angles of the parallel lines measured by the tangent of half fold angle away from $0:1$ or $1:0$ at critical values of $\theta$.
We call this ratio the \emph{double-line ratio}.
In fact, the double-line ratio of a minor crease can be made completely even, i.e., $1:1$ when $\tan\frac \theta 2$ is the geometric mean of these critical values, e.g.,$\tan \frac\theta 2  = \sqrt{\tan \frac \beta 2 \tan \frac {\pi-\alpha}{2}}$ for mode a-I.
Note that a major crease cannot have a double line ratio of $1:1$, which happens only when $\theta = 0, \pi$.
Table~\ref{table2} shows the range of the double-line ratio that can be obtained by tweaking $\theta$.

\begin{figure}[t]
	\includegraphics[width=0.9\linewidth]{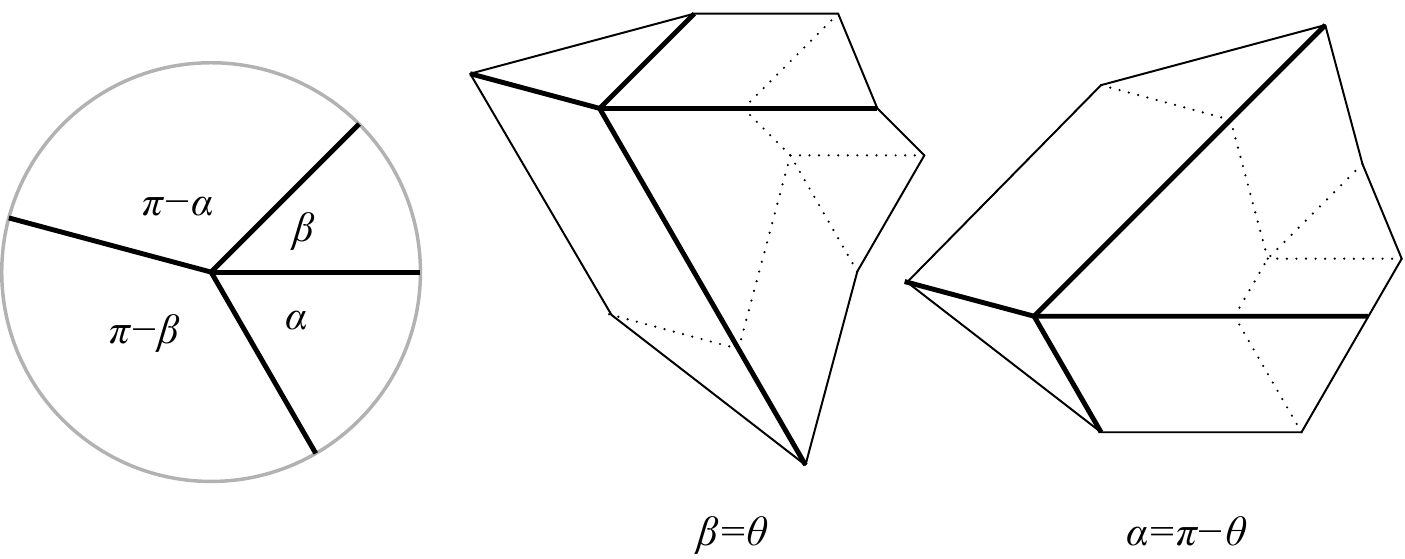}
	\centering
	\caption{Critical case is essentially a single vertex crease pattern.}
	\label{fig:critical}
\end{figure}

\begin{table}
\caption{Range of double-line ratio, written in the order, top:bottom or left:right.}
\centering
\begin{tabular}{l c c }
Mode & Major & Minor\\
\hline
a-I & $1:k$ ($-1<k<1$) & $\neq1:-1$\\
a-II &$k:1$ ($-1<k<1$) & $\neq1:-1$\\
b-I &  $1:k$ ($-1<k<1$) & $\neq1:-1$ \\
b-II & $k:1$ ($-1<k<1$) & $\neq1:-1$\\
\hline
\end{tabular}
\label{table2}
\end{table}

\begin{figure}[t]
	\includegraphics[width=\linewidth]{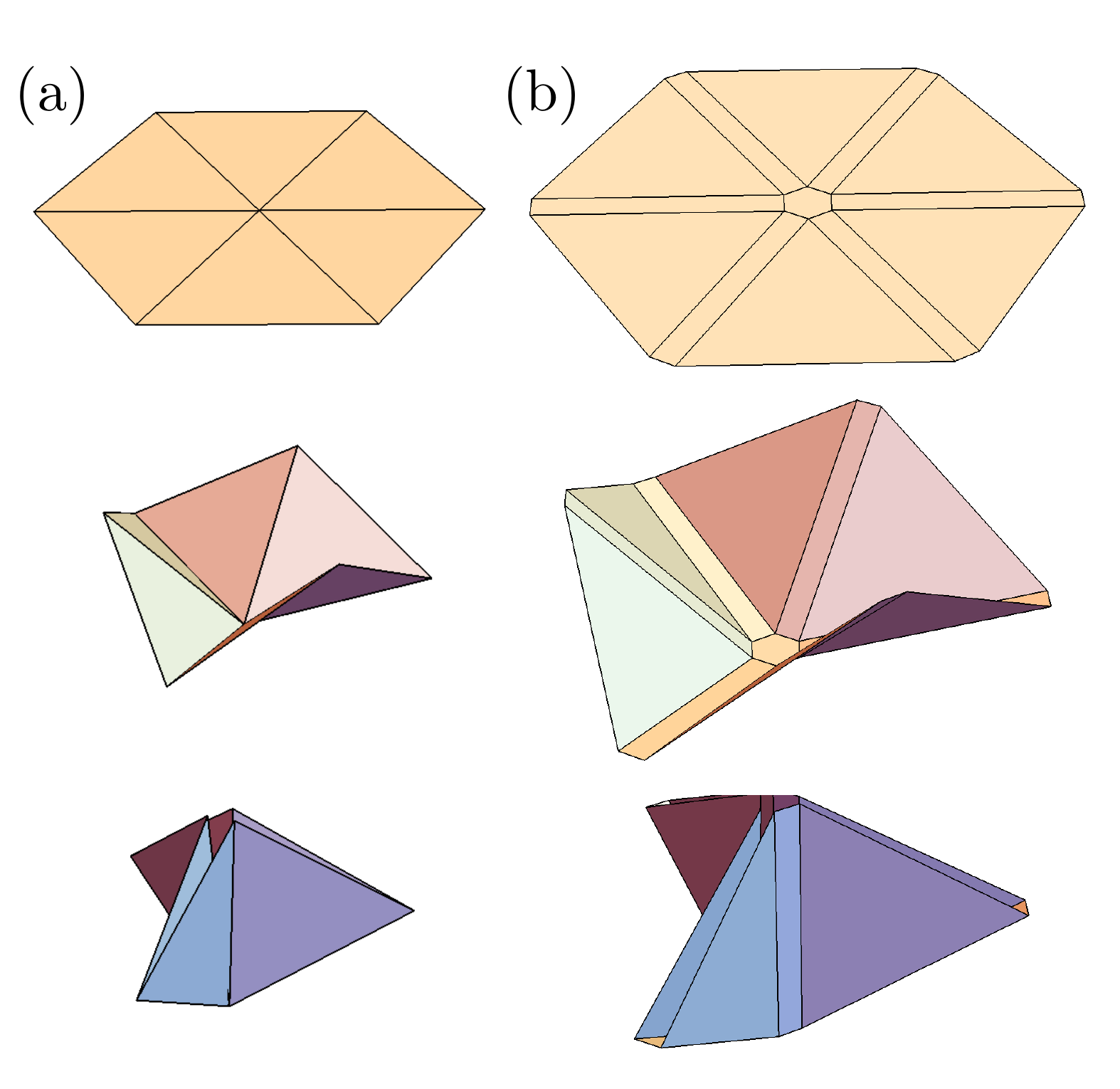}
	\centering
	\caption{(a) A fully-symmetric degree-6 vertex rigidly folding.  (b) The double-line version.}
	\label{fig:deg6}
\end{figure}

\section{Symmetric degree-\lowercase{$2n$} vertex}
\label{sec:2n}

If $V$ is a flat-foldable vertex of degree $2n$ then it is difficult to say anything general about the kinematics of $DL(V,\theta)$, mainly because $V$ will have $2n-3$ DOF, whereas $DL(V,\theta)$ will have one DOF.  Thus the kinematics of $DL(V,\theta)$ will only follow a curve in the configuration space of $V$'s kinematics, and the specifics of this curve will depend on the modes chosen for the vertices of $DL(V,\theta)$.  

Nonetheless, an analysis of the double-line version of such higher-degree vertices can be done on a case-by-case basis.  One case that is especially nice is the \emph{fully-symmetric case}, where all of the sector angles of $V$ are congruent and we assume that $V$ rigidly folds in a symmetric manner:  If $\rho_i$ are the folding angles of the creases of $V$ for $i=0,\ldots, 2n-1$, then we have that $\rho_i=\rho_j$ when the even/odd parity of $i$ and $j$ are the same.

The degree-6 case of $V$ is shown in Figure~\ref{fig:deg6}(a) and the double-line version in (b).  By symmetry, the double-line version will have equal folding angles along each double line, and these pairs alternate mountain-valley.  Let $\rho_1$ denote the valley folding angles and $\rho_2$ the mountain folding angles of the individual double-line creases.  Using the degree-4 kinematics of the Appendix applied to the vertices of $DL(V,90^\circ)$ give us that $-\tan(\rho_2/2) = p(60^\circ,90^\circ)\tan(\rho_1/2)$, and this determines the kinematics shown in Figure~\ref{fig:deg6}(b).

This turns out to prove an interesting fact about the original degree-6 vertex $V$.  Let the valley and mountain folding angles of $V$ be $\rho_a$ and $\rho_b$, respectively.  Since the kinematics of $DL(V,90^\circ)$ must match those of $V$, 
\begin{multline}
\rho_b = 2\rho_2 = -4\arctan\left(p(60^\circ,90^\circ)\tan\frac{\rho_1}{2}\right)\\
\implies \tan\frac{\rho_b}{4} =-p(60^\circ,90^\circ)\tan\frac{\rho_a}{4}
\end{multline}
since $\rho_a = 2\rho_1$.  Therefore the folding angles of a fully-symmetric degree-6 vertex are proportional under reparameterization by tangent of the quarter angle.  This was previously shown in \cite{Tachi-Hull:2016}, but the double-line method gives a different proof.  Furthermore, this same argument works for any fully-symmetric, rigidly-folding degree-$2n$ vertex; the tangent of the quarter of their folding angles will be proportional by $p(180^\circ/n,90^\circ)$.

\begin{figure}[t]
	\includegraphics[width=0.9\linewidth]{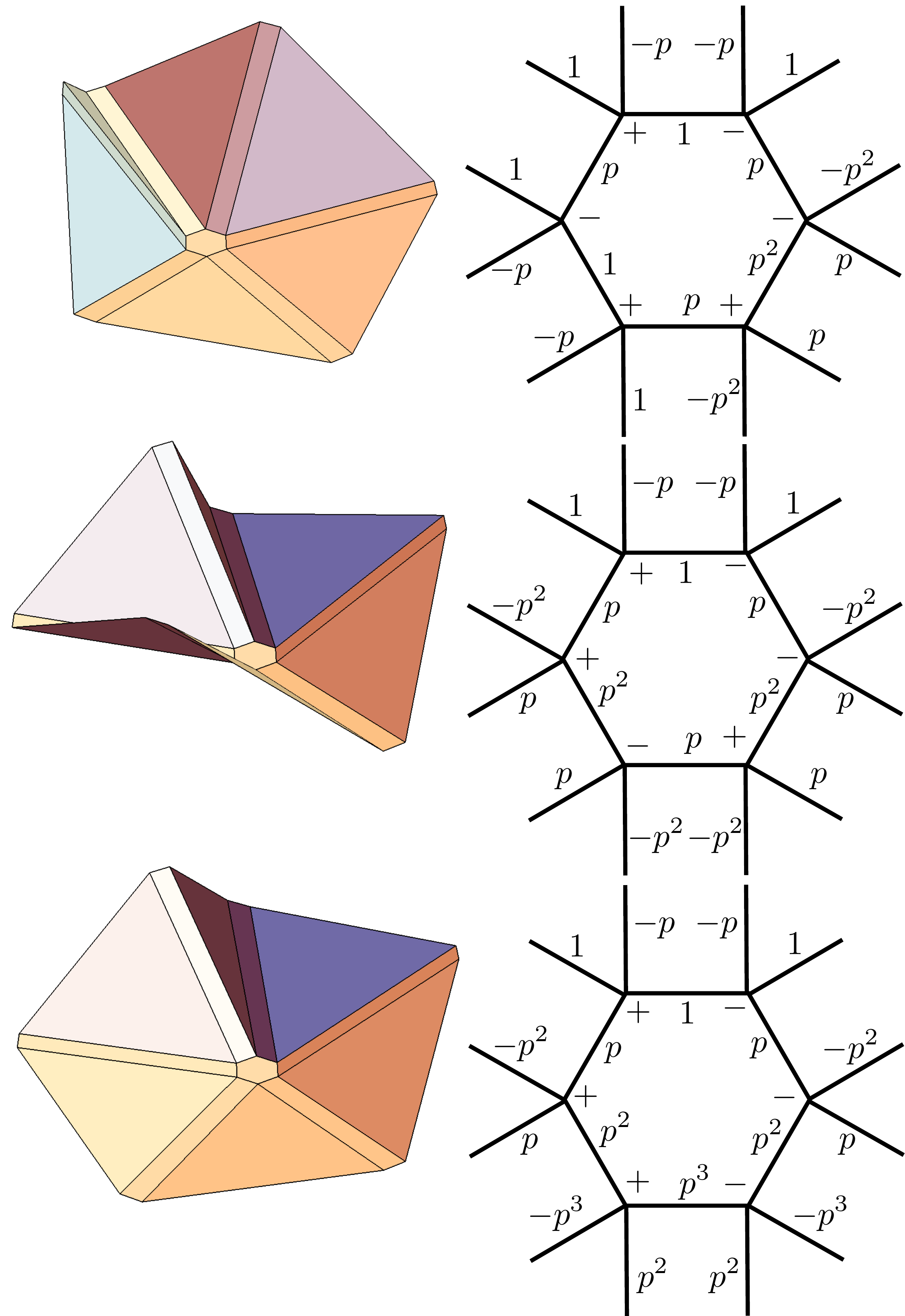}
	\centering
	\caption{The three non-fully-symmetric modes of the all-angles-equal degree-6 double-lined vertex:  Modes $(+ - - + - +)$, $(+ - - + + -)$, and $(- - + + + -)$. The creases are labeled with their folding speeds, where $p=p(60^\circ,90^\circ)$.}
	\label{fig:deg6modes}
\end{figure}

\subsection{Different modes in degree-$6$ case}

The fully-symmetric degree-6 double-line vertex case has folding mode sequence $(+ - + - + -)$ for the vertices of the inner hexagon face.  Other modes are possible, but will break symmetry.  The requirement for the folding modes of $DL(V,90^\circ)$ where $V$ is a degree-6 vertex is that the vertices around the hexagon face have three $+$ modes and three $-$ modes in order for Equation~\eqref{eq:prod} to be satisfied.  Two such sequences of $+$ and $-$ symbols will be equivalent if one is shift of the other, since this is the same as rotating the hexagon.  An easy Burnside's Lemma argument shows that there are only four distinct ways to label the vertices with $+$ and $-$ symbols in this way up to rotational symmetry.  The $(+ - + - + -)$ case is shown in Figure~\ref{fig:deg6}, and the other modes are shown in Figure~\ref{fig:deg6modes}.  Note that since the speed coefficients, which determine the $+$ and $-$ modes of the vertices, are based on a counterclockwise orientation of the hexagon, different mode sequences that are mirror symmetric can produce different modes, as $(+ - - + - +)$ and $(+ - - + + -)$ do in Figure~\ref{fig:deg6modes}.

For a degree-$2n$ vertex $V$ with all sector angles equal to $360^\circ/(2n)$, the number of different modes of $DL(V,90^\circ)$ will be the same as the number of ways to make a $2n$-bead bracelet with $n$ white and $n$ black beads, where flipping the bracelet over is considered different if it changes the order of the colored beads.  This is sequence A003239 in \cite{OEIS} and gives us the following:

\begin{theorem}\label{thm:deg2n-modes}
If $V$ is a symmetric vertex of degree $2n$, then the number of modes of $DL(V,90^\circ)$ is
$$\sum_{d|n} \phi(n/d){2d\choose d}/(2n)$$
where $\phi(x)$ is the Euler phi function, the number of positive integers less than $x$ that are coprime to $x$.
\end{theorem}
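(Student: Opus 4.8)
The plan is to recognize that, by the discussion immediately preceding the theorem, counting the modes of $DL(V,90^\circ)$ is equivalent to counting two-colorings of the $2n$ corners of the inner polygon with exactly $n$ plus-symbols and $n$ minus-symbols (the balance being forced by Equation~\eqref{eq:prod}), where two colorings are identified precisely when one is a rotation of the other, with reflections kept distinct. This is exactly the number of binary necklaces on $2n$ beads with $n$ beads of each color, i.e., the number of orbits of the cyclic group $C_{2n}$ acting by rotation on the set of balanced colorings. I would therefore attack the count directly with Burnside's Lemma (the orbit-counting theorem), so that the number of modes equals $\frac{1}{2n}\sum_{k=0}^{2n-1}|\mathrm{Fix}(r^k)|$, where $r^k$ denotes rotation of the polygon by $k$ positions.

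The core computation is to evaluate $|\mathrm{Fix}(r^k)|$. First I would note that $r^k$ decomposes the $2n$ positions into $g:=\gcd(k,2n)$ cycles, each of length $2n/g$. A coloring is fixed by $r^k$ if and only if it is constant on every cycle, so a fixed coloring is determined by assigning one color to each of the $g$ cycles; coloring $j$ of them black then yields $j\cdot(2n/g)$ black beads. The balance condition $j\cdot(2n/g)=n$ forces $j=g/2$, which admits a solution only when $g$ is even, in which case there are exactly $\binom{g}{g/2}$ fixed colorings and otherwise none. I expect this parity constraint — that only rotations whose gcd with $2n$ is even contribute anything — to be the one genuinely delicate step, since it is where the equal split into $n$ and $n$ enters and where a slip would corrupt the entire sum.

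To finish, I would reorganize the Burnside sum by grouping the indices $k\in\{0,\dots,2n-1\}$ according to the value $g=\gcd(k,2n)$. Writing each contributing even $g$ as $g=2d$, the divisibility $g\mid 2n$ becomes $d\mid n$, and the standard fact that the number of residues $k$ modulo $2n$ with $\gcd(k,2n)=g$ equals $\phi(2n/g)=\phi(n/d)$ converts the sum into
\begin{equation*}
\frac{1}{2n}\sum_{d\mid n}\phi(n/d)\binom{2d}{d},
\end{equation*}
which is precisely the claimed expression and matches sequence A003239 in~\cite{OEIS}. Since $g\mapsto d=g/2$ is a bijection between the even divisors of $2n$ and the divisors $d$ of $n$, no term is omitted or counted twice, and the case $k=0$ (the identity) correctly contributes the $d=n$ term $\binom{2n}{n}$ of all balanced colorings; this completes the argument. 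As a sanity check I would confirm the formula returns $4$ for $n=3$, agreeing with the four degree-$6$ modes found earlier.
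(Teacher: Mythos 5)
Your proposal is correct and takes essentially the same route as the paper: the identification of modes with balanced $+/-$ labelings of the $2n$ corners up to rotation (with reflections kept distinct) is exactly the paper's reduction, and the paper then simply cites OEIS sequence A003239 for the closed form, invoking Burnside's Lemma explicitly only in the degree-$6$ case. Your Burnside computation --- the $\gcd(k,2n)$ cycle decomposition, the parity constraint forcing $j=g/2$, and the regrouping over even divisors $g=2d$ with $\phi(2n/g)=\phi(n/d)$ --- is an accurate derivation of that cited formula, so your argument is in fact more self-contained than the paper's.
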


Thus a degree-8 symmetric vertex double-lined will have 10 modes, and a degree-12 one will have 26, for example.


\section{Application to thick origami}
\label{sec:thick}

\subsection{thickening}
When realizing deployable structures or robotic systems based on rigid foldable origami, the thickness of the panel is an important issue because a small modification to the position of creases may easily result in loosing rigid foldability.
Several methods for thick panel origami have been proposed~\cite{Hoberman:1988,Tachi:2011,Chen:2015,Ku-Demaine:2016}.
The volume trim method~\cite{Tachi:2011} is a universal basic method applicable to a wide family of rigid foldable origami patterns.
The benefit of the volume trim is that the thick version preserves the original kinematics of ideal zero-thickness origami and that the fabrication is simply realized by sandwiching a zero-thickness core with thick panels.
The drawback is that it often suffers from the trade-off between thickness and the maximum sharpness of fold angles;
to realize a kinematic range defined by a maximum folding angle $\rho_\textrm{max}>\frac\pi 2$, the thickness $t$ of panels is bounded by $ t_\textrm{max} \propto \tan\frac{\pi-\rho_\textrm{max}}{2}$ (Figure~\ref{fig:thick}(2)).
In particular, it is not possible to get a flat folding of $180^\circ$ because that would make the thickness of panels approach to $0$.
This limits the appeal of flat-foldable origami design for compact stowage when real thick panels are used, especially for big structures.

The double-line method can be an effective preconditioning method for volume-trim based thick rigid origami 
because when the parallel foldlines of $DL(V,\theta)$ have the same signs, the double-line method ``chamfers'' theoriginal creases of $V$, where the fold angle of each crease is distributed to the fold angles of the pair of creases (Figure~\ref{fig:thick}(3)).
Therefore the double-line method can split a shaper crease into multiple milder creases that can be realized by reasonably thick panels. 
In particular, we can get a ``flat folded'' state in a macroscopic sense, while using milder fold angles.
Such a use of double lines for each crease for thickening rigid origami is proposed by~\cite{Ku-Demaine:2016} as a crease-offset method, although with the necessity of holes added to each vertex causing a structurally undesirable play.
Our double-line method is essentially a special family of the crease-offset approach that can fill the hole,
thus we may obtain a watertight surface without undesired play and that follows an analytically describable kinematics.

Another benefit of using the double-line method is that we can realize thick panel origami by only placing panes on one side.
This is because the milder fold angle allows for panels to exist on the valley side of the ideal zero-thickness origami.
This is helpful for extending thick rigid origami to layered non-manifold origami because the side without thick panels follows an ideal zero-thickness origami, they can be shared side by side to the other structure.

\begin{figure}[t]
	\includegraphics[width=\linewidth]{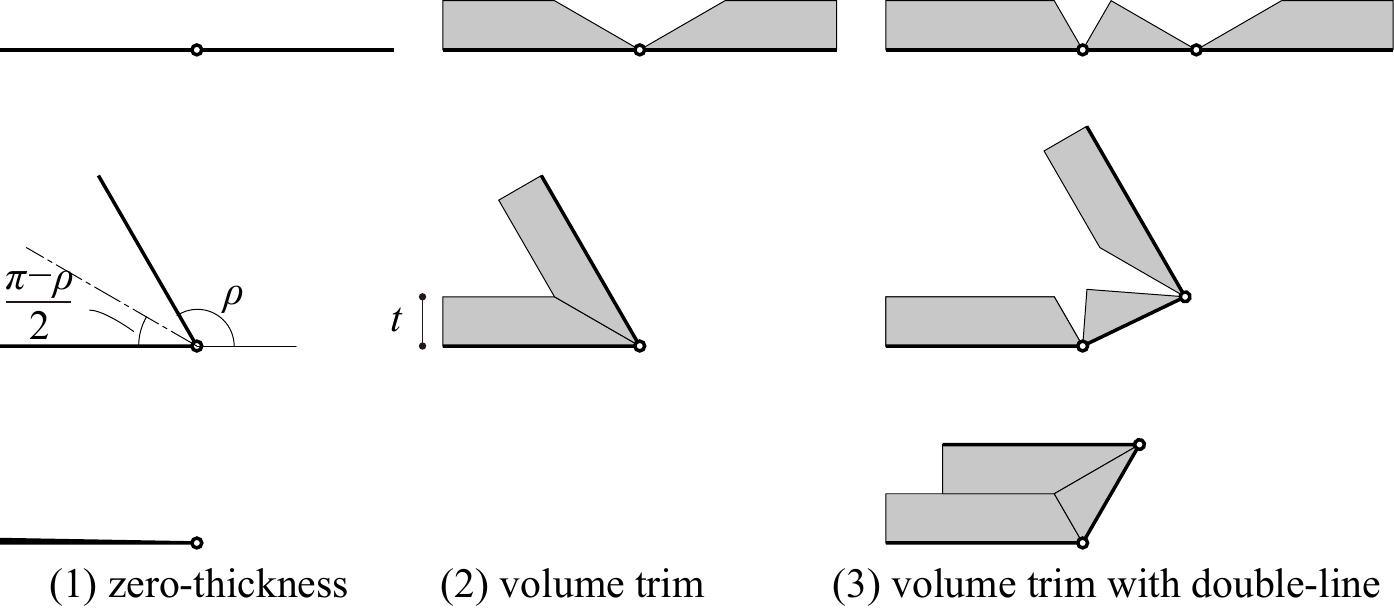}
	\centering
	\caption{Thickening of double-lined origami. Left: ideal zero-thickness origami. Middle: volume trim method applied to the original crease pattern. Right: volume trim method applied to double-lined origami.}
	\label{fig:thick}
\end{figure}

\subsection{Connecting multiple vertices}
Sections~\ref{sec:deg4} and~\ref{sec:2n} show that flat-foldable degree-$4$ vertices and $n$-fold symmetric $2n$-degree vertices can be converted to double-line origami with rigid foldability.
To apply the double-line method for different origami designs, we want to know if this double-line method can apply to origami structures composed of multiple vertices.

Consider connecting $DL(V_1, \theta_1)$ and $DL(V_2, \theta_2)$.
Then we need to consider the compatibility of two fold-lines shared by these vertices.
This can be guaranteed by matching the double-line ratio of shared edges by tweaking parameters $\theta_1$ and $\theta_2$.
More specifically, consider that $\theta_1$ is given, so the double-line ratio of the shared edge is given.
We would like to find $\theta_2$ such that $V_2$ has compatible double-line ratio at the shared edge.
Without the loss of generality, consider $V_2$ folds along mode a, then for either mode a-I or a-II of $DL(V_2,\theta_2)$, there exists $\theta_2$ such that the shared edge has the specified double-line ratio, except when the shared edge is major and the ratio is $1:1$ or when shared edge is minor and the ratio is $1:-1$ (refer to Table~\ref{table2}).
We can avoid such a finite number of bad cases because there is a continuous range of $\theta_1$ that gives a continuous range of the double-line ratio for the shared edge.
In this manner, when we have a serially connected double line vertices $DL(V_1, \theta_1),\dots,DL(V_n, \theta_n)$, another vertex $DL(V_{n+1},\theta_{n+1})$ can be connected to the system when they share only one pair of edges by tweaking $\theta_{n+1}$.
Therefore, there exists rigidly foldable double-line crease pattern of any network of degree-4 flat-foldable vertices that forms a tree (without a cycle, and thus without an interior face).
Furthermore, the double-line pattern can fold along the folding mode of the original crease pattern.
Multi-vertex crease pattern with a cycle of vertices (interior face) in general cannot always give double line rigid origami.
However, some origami tessellations with symmetry, such as Miura-ori and (elongated) Yoshimura-pattern can have double-line version with rigid foldability (Figures~\ref{fig:yoshimura} and~\ref{fig:miura}).

\begin{figure}[t]
	\includegraphics[width=\linewidth]{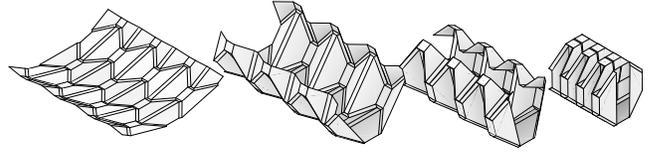}
	\centering
	\caption{Double-line version of elongated Yoshimura pattern.}
	\label{fig:yoshimura}
\end{figure}
\begin{figure}[t]
	\includegraphics[width=\linewidth]{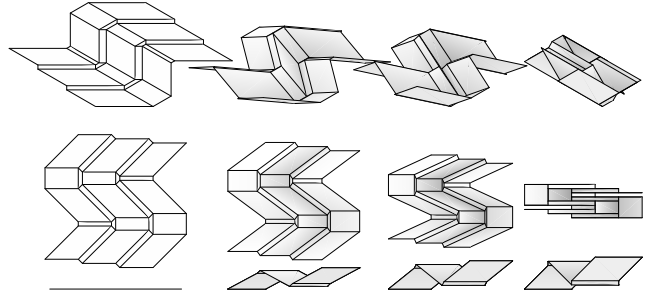}
	\centering
	\caption{Double-line version of Miura-ori. Top: isometric view. Middle and Bottom: top and front views.}
	\label{fig:miura}
\end{figure}

\section{Discussion}
We have shown the concept of double line rigid origami that can be a useful method for interpreting existing rigid origami kinematics, or to obtain a watertight thick rigid origami that can be folded completely flat.
Although we focused the analyses on degree-4 flat foldable vertex and symmetric $2n$ degree vertex in this paper, 
there are more examples of crease patterns that the double-line method can apply; the characterization of such patterns remain unsolved.
Also, we would like to characterize the multi-vertex double-line rigid origami in future.

\section*{Acknowledgments}
T. Tachi is supported by JSPS KAKENHI Grant-in-Aid for Young Scientists (A) 16H06106.

\section*{Appendix}
A flat foldable, degree-$4$ vertex composed of sector angles $\alpha, \beta, \pi-\alpha, \pi-\beta$ in counterclockwise order, rigidly folds along two folding modes keeping the tangents of half fold angles proportional to each other~\cite{huffman:1976,huffman:1978,Tachi-Hull:2016}.
Specifically, let $\rho_i$ denote the fold angle of crease $e_i$, where $i=0,1,2,3$ in the counterclockwise order around the vertex and crease $e_1$ is placed between $\alpha, \beta$.
Then, the folding modes represented by the tangents of half fold angles are
\begin{align}
\left(\begin{matrix} \tan \frac{\rho_0}{2} & \tan \frac{\rho_1}{2} & \tan \frac{\rho_2}{2} &\tan \frac{\rho_3}{2}\end{matrix}\right) =  \nonumber\\
\begin{cases}
\left(\begin{matrix}1 & -p(\alpha,\beta) & 1 & p(\alpha,\beta) \end{matrix}\right) t& \textrm{mode (a)}\\
\left(\begin{matrix}-q(\alpha,\beta)& 1& q(\alpha,\beta)&1 \end{matrix}\right) t&  \textrm{mode (b)}
\end{cases},
\label{eq:folding-modes}
\end{align}
where
\begin{align}
p\left(\alpha,\beta\right)
&={1-\tan{\alpha\over 2}\tan{\beta\over 2}\over 1+\tan{\alpha\over 2}\tan{\beta\over 2}}\\
q\left(\alpha,\beta\right)
&={-\tan{\alpha\over 2} +\tan{\beta\over 2} \over \tan{\alpha\over 2}+\tan{\beta\over 2}}.
\end{align}
\begin{figure}[tbhp]
	\includegraphics[width=\linewidth]{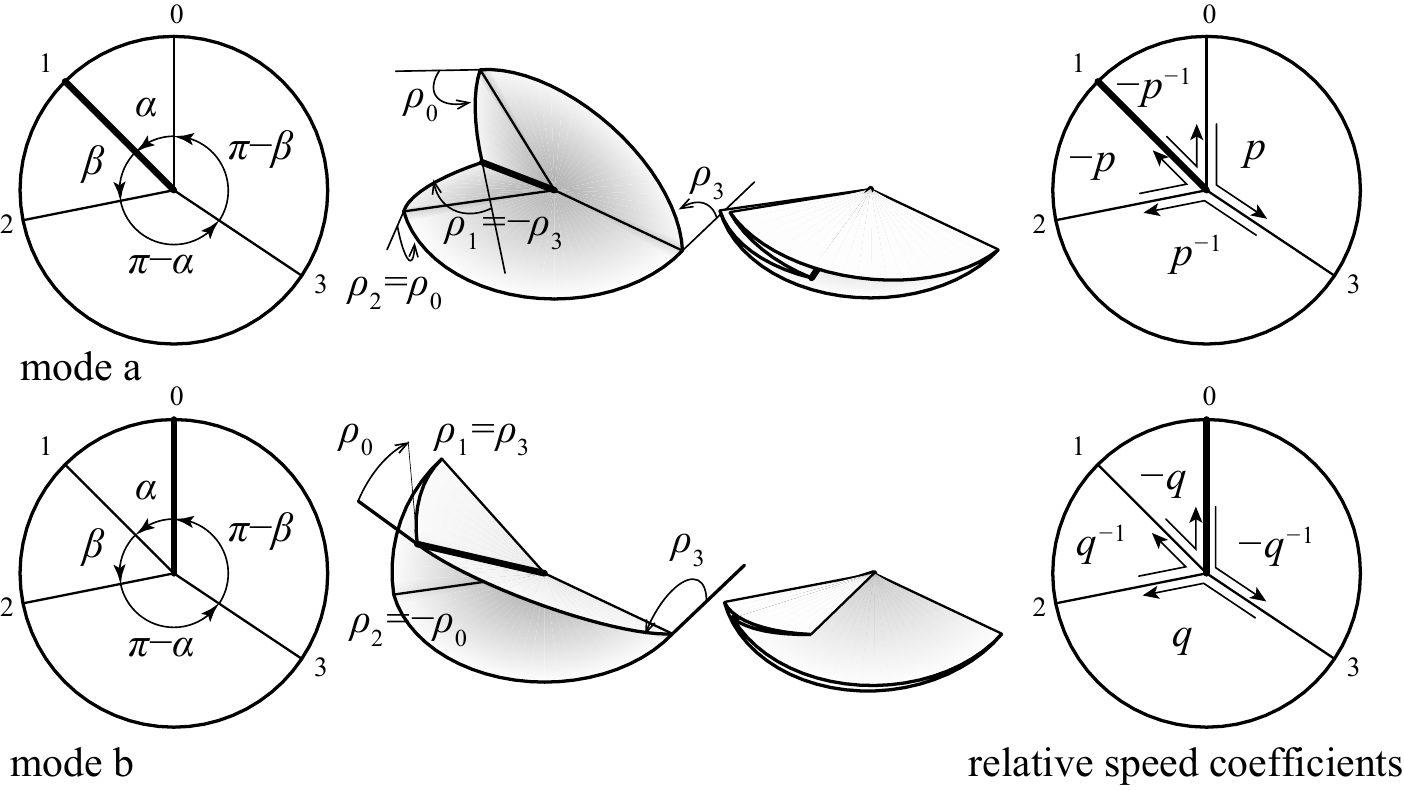}
	\centering
	\caption{A single, flat-foldable degree-4 vertex.}
	\label{fig:ffd4vert}
\end{figure}

Now, we consider connecting these flat-foldable degree-$4$ vertices to compose a multi-vertex origami.
Notice that the ratio between the folding speeds of two edges measured by the tangent of half angle are constant following Equation~\eqref{eq:folding-modes}; we call the constant \emph{speed coefficient}. 
Therefore, the rigid folding motion is given by a consistent folding speed coefficients, and the rigid foldability is the assignment problem of modes at each vertex to switch the coefficients.
Specifically, the following gives the finite rigid foldability of a topologically disk origami composed of flat-foldable degree-$4$ vertices.
For each interior face surrounded by creases $c_i$ ($i = 0,\dots, k-1$) in counterclockwise order,
\begin{align}
\prod_{i=0, \dots, k-1} p_{i+1,i} &= 1,
\label{eq:prod}
\end{align}
where $p_{i+1,i} = \tan \frac{\rho_{i+1}}{2} / \tan \frac{\rho_i}{2}$ are the \emph{speed coefficients} from $c_i$ to $c_{i+1}$, which can be represented by $\pm p(\alpha, \beta)^{\pm 1}$ or $\pm q(\alpha, \beta)^{\pm 1}$ of the vertex incident to $c_i$ and $c_{i+1}$. 
The rightmost column of Figure~\ref{fig:ffd4vert} shows the  speed coefficients between such adjacent creases.

\bibliography{double-line}


\section*{About the authors}
\begin{enumerate}
\item Thomas C. Hull is an associate professor of mathematics at Western New England University who has been studying the mathematics of origami since the 1990s.
\item Tomohiro Tachi is an assistant professor of graphic and computer sciences at the University of Tokyo. ​ ​He has been studying structural morphology and kinematic design through computational origami. 
\end{enumerate}

\end{document}